\newcommand{\norm}[1]{\left\|#1\right\|}
\newcommand{\F}{\mathsf{F}}
\newcommand{\J}{\mathsf{J}}
\newcommand{\M}{\mathsf{M}}
\newcommand{\A}{\mathsf{A}}
\newcommand{\Id}{\,\mathsf{Id}}
\newtheorem{theorem}{Theorem}[section]
\newtheorem{cor}[theorem]{Corollary}
\theoremstyle{definition}
\newtheorem{example}[theorem]{Example}
\title[A global Newton-type scheme based on a simplified Newton-type approach]{A global Newton-type scheme based on a simplified Newton-type approach}
\author[M.~Amrein]{Mario Amrein}
\address{Applied University of Zurich, CH-8400 Switzerland}
\email{mario.amrein@zhaw.ch}
\begin{document}
\normalem
\begin{abstract}
Globalization concepts for Newton-type iteration schemes are widely used when solving nonlinear problems numerically. Most of these schemes are based on a predictor/corrector step size methodology with the aim of steering an initial guess to a zero of $f$ without switching between different attractors. In doing so, one is typically able to reduce the chaotic behavior of the classical Newton-type iteration scheme. In this note we propose a globalization methodology for general Newton-type iteration concepts which changes into a simplified Newton iteration as soon as the transformed residual of the underlying function is small enough. Based on Banach's fixed-point theorem, we show that there exists a neighborhood around a suitable iterate $x_{n}$ such that we can steer the iterates---without any adaptive step size control but using a simplified Newton-type iteration within this neighborhood---arbitrarily close to an exact zero of $f$. We further exemplify the theoretical result within a global Newton-type iteration procedure and discuss further an algorithmic realization. Our proposed scheme will be demonstrated on a low-dimensional example thereby emphasizing the advantage of this new solution procedure.     
\end{abstract}

\keywords{Global Newton methods, simplified Newton method, a posteriori analysis, Newton path}

\subjclass[2010]{37N30,46N40,58C15,65H10,49M15}

\maketitle

\section{Introduction}
For the time being, let $U\subset \mathbb{R}^{n}$ be open and $f:U\rightarrow \mathbb{R}^{n}$ be of class $C^{1}(U;\mathbb{R}^{n})$. In this note we are interested in finding the zeros $x\in U$ of $f$ i.e., we aim to solve the equation
\begin{equation}
\label{eq:1} x\in U: \quad f(x)=0.
\end{equation}

In general---apart from trivial toy problems---the solutions $x_{\infty}$ can only be computed numerically. 
Here, we focus on the following approach: 
For $x\in U$ we consider the matrix-valued map $x\mapsto \M(x) \in \mathbb{R}^{n\times n}$ and define 
$\F(x):=-\M(x)^{-1}f(x)$. Supposing that $\M(x)$ is invertible on a suitable subset of $U$, we now concentrate on the initial value problem
\begin{equation}
\label{eq:initial}
\begin{cases}
\dot{x}(t)&=\F(x(t)), \quad t\geq 0,\\
x(0)&=x_0.
\end{cases}
\end{equation}

This initial value problem tackles the problem of finding the zeros of $f$ from a dynamical system approach. In fact, if $\M(x)$ is given by the Jacobian of $f$ we recover the well known continuous Newton scheme formally satisfying $f(x(t))=f(x_0)\mathrm{e}^{-t}$. For an excellent survey of the continuous Newton scheme see e.g. \cite{1,2,3,D04}. Indeed, supposing that a solution $x(t)$ exists for all time $t\geq 0$, we can try to follow the trajectory of $x(t)$ numerically in order to end up with an approximate root for $f$. For an initial guess $x_{0}\in U$ the simplest routine for solving \eqref{eq:initial} numerically is given by the forward Euler method:
\begin{equation}
\label{eq:forward}
x_{n+1}=x_{n}-t_n\M(x_{n})^{-1}f(x_{n}), \quad t_{n}\in (0,1], n\geq 0.
\end{equation} 

For example, if we choose $\M(x):=\Id$, the above iteration scheme is termed \emph{Piccard-Iteration}. If $\J_{f}(x)$ signifies the Jacobian of $f$ at $x\in U$, then for $\M(x)=\J_{f}(x)$ we observe  a \emph{damped Newton-method}. Another well established scheme is given by setting $\M(x):=\J_{f}(x_0)$, which is also called \emph{simplified Newton method}. The last choice simply freezes the information of the Jacobian throughout the whole iteration procedure. This typically reduces the computational effort in each iteration step. On the other hand, the number of iterations increases in general and the domain of convergence is reduced by this method. However, on a local level, i.e., when the initial guess $x_0$ is supposed to be `sufficiently' close to a zero of $f$, it is reasonable to expect that the simplified Newton method safely leads to a zero which is located next to the initial guess $x_0$. Indeed, if the update $\M(x_n)^{-1}f(x_{n})$ is small enough, we will see in Section 2 that there exists a unique zero for $f$ locally that can be obtained by the following simplified Newton-type iteration scheme:
\begin{equation}
\label{eq:simplified-newton}
u_{j+1}=u_{j}-\M(x_{n})^{-1}f(u_{j}), \quad u_{n}=x_{n}, \quad j\geq n. 
\end{equation}

This observation is especially interesting when the computation of the matrix $\M(x_{n})$ is computationally expensive---as for instance when we solve extremly large scale nonlinear problems arising from the discretization of PDE's. Furthermore, the proposed result in this work asserts local uniqueness of the solution. Thus, one can think of steering an initial guess $x_0\in U$ assumed to be far away of a zero for $f$, `sufficiently' close to the root which is located next to $x_{0}$. Having hit the domain of local uniqueness of the underlying zero we then switch from the adaptive iteration \eqref{eq:forward} to the simplified iteration scheme given in \eqref{eq:simplified-newton} without using any adaptive step-size control.

\subsubsection*{Notation:} In this note we signify by $(\cdot,\cdot)$ the standard Euclidean product of $\mathbb{R}^{n}$. For any $x$ its norm is given by $\norm{x}:=\sqrt{(x,x)}$. For a matrix $\M\in \mathbb{R}^{n\times n}$ we further use the operator norm $\norm{\M}:=\sup_{\norm{x}=1}{\norm{\M x}}$. By $B_{R}(x)$ we denote the closed ball of radius $R$ centered at $x\in \mathbb{R}^{n}$.
Finally, whenever the function~$f$ is differentiable, the derivative at a point~$x\in U$ is written as~$\J_{f}(x)$, thereby referring to the Jacobian of~$f$ at~$x$.

\subsubsection*{Outline:} This note is organized as follows: In section \ref{sec:2} we state and prove a convergence result for a general class of simplified Newton-type iterations schemes as given in \eqref{eq:simplified-newton}. Therefore we firstly discuss the assumptions that have to hold true in order to establish the proposed convergence result. In particular, we embed the local convergence result into a global---and therefore adaptive---Newton-type iteration scheme as given in \eqref{eq:forward}. On that account, in section \ref{sec:3} we finally present and discuss our adaptive strategy on a low dimensional example employing the advantage of the proposed iteration scheme. In section \ref{sec:4} we summarize and comment our findings.

\section{A convergence result}
\label{sec:2}

As a preparation towards the proposed main result we firstly address the assumptions that have to hold. In addition, we comment on a possible extension of the proposed result to a general Banach space framework.

\subsection{Assumptions:}

Suppose we are given an initial value $x_{0}\in U$ and suppose we can compute 
\begin{equation}
\label{eq:Newton-Like}
x_{j+1}=x_{j}-t_{j}\M(x_{j})^{-1}f(x_{j}), \quad t_{j}\in (0,1], \quad j\geq 0.
\end{equation}
Here, $t_{j}$ signifies some adaptively chosen step size (see, e.g. \cite{Amrein:18,Potschka,AmreinWihler:14,ScWi11}).

Let $U$ be an open and convex subset of $\mathbb{R}^{n}$ and assume further that there exists an iterate $x_{n}\in U$ such that there holds the following assumptions:

\begin{itemize}
\item[\textbf{A1.}] Let $\omega $ be a positive constant. For any $v \in U$ and for any $z \in \{tx_n+(1-t)v|t\in [0,1]\}$ we assume that there holds the following affine covariant type Lipschitz-condition on $\J_{f}$:
\begin{equation}
\label{eq:affine invariant}
\norm{\M(x_n)^{-1}(\J_{f}(x_n)-\J_{f}(z))(x_n-v)}\leq \omega (1-t)\norm{x_n-v}^{2}.
\end{equation}
\item[\textbf{A2.}]
We further need $\M(x_n)^{-1}$ to be a sufficiently accurate approximate of the inverse of the Jacobian $\J_{f}(x_n)$ which we here quantify by the following assumption
\[
\norm{\Id-\M(x_n)^{-1}\J_{f}(x_n)}\leq \kappa<1.
\]
\item[\textbf{A3.}]
For $\alpha_{n}:=\norm{\M(x_{n})^{-1}f(x_{n})}$ we need to assume that 
\begin{equation}
\label{eq:main-condition}
\omega \alpha_{n} \leq \frac{(1-\kappa)^2}{2}.
\end{equation}
\item[\textbf{A4.}] For 
\begin{equation}
\label{eq:radius}
R:=\frac{1-\kappa}{\omega}+ \sqrt{\frac{(1-\kappa)^2}{\omega^2}-\frac{2\alpha_n}{\omega}}
\end{equation}

there holds $B_{R}(x_{n})\subset U$.
\end{itemize}

Assumption \textbf{A1} is called affine covariant type Lipschitz condition because in case of $\M(x_n)=\J_{f}(x_n)$ the Lipschitz constant $\omega$ is an affine invariant quantity. Indeed, for $\A\in \text{Gl}(n)$ and $\F(x):=\A f(x)$ there holds
\[
\J_{\F}(x_{n})^{-1}(\J_{\F}(z)-\J_{\F}(x_{n}))(v-x_{n})=\J_{f}(x_{n})^{-1}(\J_{f}(z)-\J_{f}(x_{n}))(v-x_{n}).
\]

For further details concerning affine invariance principles within the framework of Newton-type iterations schemes we refer to the excellent monograph \cite{D04} and the proposed adaptive schemes therein.  

Supposing that $\M(x_{n})^{-1}$ is bounded, then condition \eqref{eq:main-condition} in \textbf{A3} also holds true whenever the residual $\norm{f(x_{n})}$ is `sufficiently' small in the sense that 
\begin{equation}
\label{eq:residual}
\omega \alpha_{n}\leq  \omega \norm{\M(x_{n})^{-1}}\norm{f(x_{n})} \leq \frac{(1-\kappa)^2}{2}.
\end{equation}

Thus, the proposed result implies that whenever the norm of the residual $\norm{f(x_{n})}$ is small enough, there exists a zero on a local level. This is of particular interest when solving nonlinear differential equations numerically  within the context of a fully adaptive iteration scheme. More precisely, let $X$ denote a Banach space---in most cases $X=H_{0}^{1}(\Omega)$---and $X'$ its dual respectively. Then the weak formulation of a nonlinear differential equation reads as follows:

Find $x\in X$ such that there holds
\begin{equation}
\label{eq:Banach}
\left\langle f(x),v\right\rangle_{X'\times X}=0 \quad \forall v\in X, \text{i.e.} \quad f(x)=0 \quad 	\text{in} \quad X', 
\end{equation} 
with $\left\langle \cdot, \cdot \right\rangle_{X'\times X}$ signifying the duality pairing in $X'\times X$.

Solving \eqref{eq:Banach} within the context of an adaptive solution procedure over some finite dimensional space $X_{h}\subset X$---here $h$ typically signifies the mesh-size parameter in the finite element method---one then can try to derive computational quantities $\eta_{\textbf{h}}(x_{h})$ and $\eta_{\textbf{L}}(x_{h})$ such that there holds:
\begin{equation}
\label{eq:residualerror}
\norm{f(x_{h})}_{X'}\leq \eta_{\textbf{h}}(x_{h})+\eta_{\textbf{L}}(x_{h}).
\end{equation}
Here, the quantity $\eta_{\textbf{L}}(x_{h})$ signifies an error estimate which measures the linearization error whereas $\eta_{\textbf{h}}(x_{h})$ represents the discretization error (see, e.g. \cite{CongreveWihler:15,HeidWihler:18,HeidWihler:19,El-AlaouiErnVohralik:11,ChaillouSuri:06,ChaillouSuri:07,Han:94,AmreinWihler:15,Doerfler,AmreinWihlerMelenk}). 
Using \eqref{eq:residual} and supposing that the quantities $\eta_{\textbf{h}}(x_{h})$ and $\eta_{\textbf{L}}(x_{h})$ are small enough we obtain
\[
\omega \norm{\M(x_{n})^{-1}}_{\mathcal{L}(X',X)}(\eta_{\textbf{h}}(x_{h})+\eta_{\textbf{L}}(x_{h}))\leq \frac{(1-\kappa)^2}{2},
\] 

i.e. the a posteriori existence of the solution is guaranteed. Indeed, the a posteriori existence in numerical computations has been addressed in detail \cite{Ortner:09}---especially in the context of solving semilinear problems.
However, although we discuss and present our adaptive scheme in view of dealing with systems of nonlinear equations over $\mathbb{R}^{n}$, it is noteworthy that the established convergence result also holds true within a general Banach space setting. Indeed, our convergence result can be used to realize a specialization of the recently established adaptive iterative linearized Galerkin methodology (ILG) discussed in \cite{HeidWihler:18,HeidWihler:19}.

\begin{theorem}
\label{theorem:1}
Suppose that $f \in C^{1}(U;\mathbb{R}^{n})$. Further assume that there holds the assumptions $\emph{\textbf{A1}}\&\emph{\textbf{A2}}\&\emph{\textbf{A3}}$ and $\emph{\textbf{A4}}$.

Then the map
\begin{equation}
\label{eq:g}
U\ni v \mapsto g(v):=v-\M(x_n)^{-1}f(v).
\end{equation}
satisfies 
\[
g(B_{R}(x_{n}))\subset B_{R}(x_{n}).
\] 

\end{theorem}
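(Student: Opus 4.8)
The plan is to verify the self-mapping property directly: for an arbitrary $v\in B_R(x_n)$ I would estimate $\norm{g(v)-x_n}$ and show it never exceeds $R$ (this is the self-map half of the Banach fixed-point setup, so the contraction estimate is not needed here). First I would expose the base residual by inserting and removing $\M(x_n)^{-1}f(x_n)$: since $g(v)-x_n=(v-x_n)-\M(x_n)^{-1}f(v)$, one obtains
\begin{equation*}
g(v)-x_n=-\M(x_n)^{-1}f(x_n)+\Big[(v-x_n)+\M(x_n)^{-1}\big(f(x_n)-f(v)\big)\Big].
\end{equation*}
The first summand has norm exactly $\alpha_n$ by definition, so the entire task reduces to controlling the bracketed remainder.

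For the remainder I would apply the fundamental theorem of calculus along the segment joining $v$ to $x_n$. Parametrising $z(t):=tx_n+(1-t)v$---precisely the segment appearing in \textbf{A1}---gives $f(x_n)-f(v)=\int_0^1\J_f(z(t))(x_n-v)\dt$, and since $(v-x_n)=-\int_0^1(x_n-v)\dt$ the remainder collapses into a single integral
\begin{equation*}
\int_0^1\big[\M(x_n)^{-1}\J_f(z(t))-\Id\big](x_n-v)\dt.
\end{equation*}
Adding and subtracting $\M(x_n)^{-1}\J_f(x_n)$ inside the bracket then separates this into the constant contribution $\big[\M(x_n)^{-1}\J_f(x_n)-\Id\big](x_n-v)$ and the integral of $\M(x_n)^{-1}\big(\J_f(z(t))-\J_f(x_n)\big)(x_n-v)$.

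The two estimates now follow directly from the hypotheses. The constant contribution is bounded by $\kappa\norm{x_n-v}$ through \textbf{A2}, while the integrand of the remaining term is bounded by $\omega(1-t)\norm{x_n-v}^2$ through the affine covariant condition \textbf{A1} (the sign flip in $\J_f(z(t))-\J_f(x_n)$ leaving the norm unchanged); integrating $\int_0^1(1-t)\dt=\tfrac12$ produces the factor $\tfrac{\omega}{2}$. Collecting the three pieces and using $\norm{x_n-v}\leq R$ yields the quadratic bound
\begin{equation*}
\norm{g(v)-x_n}\leq\alpha_n+\kappa R+\frac{\omega}{2}R^2.
\end{equation*}

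It then remains to check that this upper bound is itself at most $R$, i.e. that $\tfrac{\omega}{2}R^2-(1-\kappa)R+\alpha_n\leq0$. The main point---and the only place where any genuine care is required---is to recognise that the $R$ of \textbf{A4} is exactly the larger root of the upward-opening quadratic $q(r)=\tfrac{\omega}{2}r^2-(1-\kappa)r+\alpha_n$, so that $q(R)=0$ and the bound is attained with equality. This in turn needs the discriminant $\tfrac{(1-\kappa)^2}{\omega^2}-\tfrac{2\alpha_n}{\omega}$ to be nonnegative, which is precisely the content of \textbf{A3}; hence $R$ is real and well defined, while \textbf{A4} ensures $B_R(x_n)\subset U$ so that $g$ is defined on the entire ball. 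Therefore $\norm{g(v)-x_n}\leq R$ for every $v\in B_R(x_n)$, which establishes $g(B_R(x_n))\subset B_R(x_n)$.
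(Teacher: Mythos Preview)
Your proof is correct and follows essentially the same route as the paper: the same decomposition exposing $\alpha_n$, the same integral representation along $z(t)=tx_n+(1-t)v$, the same splitting via $\M(x_n)^{-1}\J_f(x_n)$ to invoke \textbf{A1} and \textbf{A2}, and the same identification of $R$ as a root of the quadratic $\tfrac{\omega}{2}r^2-(1-\kappa)r+\alpha_n$. Your explicit remark that \textbf{A3} guarantees the discriminant is nonnegative (so that $R$ is real) is a point the paper leaves implicit.
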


\begin{proof}

First of all we rewrite the function $g$ as follows	
\[
g(v)=x_{n}-\M(x_{n})^{-1}f(x_{n})-\left((x_{n}-v)- \M(x_{n})^{-1}(f(x_n)-f(v))\right)
\]

Let $v\in B_{R}(x_n)$. For $t\in [0,1] $ we define the line segment $z(t):=tx_{n}+(1-t)v \subset B_{R}(x_{n})$ and use the integral form of the mean value theorem
\[
\begin{aligned}
(x_{n}-v)- \M(x_{n})^{-1}(f(x_n)-f(v))&=(x_n-v)-\int_{0}^{1}{\M(x_{n})^{-1}\frac{\mathrm{d}}{\mathrm{d}t}f(z(t))\mathrm{d}t}\\
&=\int_{0}^{1}{(\Id-\M(x_{n})^{-1}\J_{f}(z(t)))(x_n-v)\mathrm{d}t}\\
&=\int_{0}^{1}{\M(x_n)^{-1}(\M(x_{n})-\J_{f}(z(t)))(x_n-v)\mathrm{d}t}\\
&=\int_{0}^{1}{(\Id-\M(x_n)^{-1}\J_{f}(x_n))(x_n-v)\mathrm{d}t}\\
&\quad +\int_{0}^{1}{\M(x_n)^{-1}(\J_{f}(x_n)-\J_{f}(z(t)))(x_{n}-v)\mathrm{d}t}
\end{aligned}
\]
from where we obtain by \textbf{A1}\&\textbf{A2}
\begin{equation}
\label{eq:zw}
\norm{(x_{n}-v)- \M(x_{n})^{-1}(f(x_n)-f(v))}\leq \left(\kappa+\omega \frac{1}{2}\norm{x_n-v}\right)\norm{x_n-v}.
\end{equation}

Thus there holds
\[
\norm{g(v)-x_n}\leq \alpha_{n}+\left(\kappa+\omega \frac{1}{2}\norm{v-x_n}\right)\norm{v-x_n}
\leq \alpha_n+\left(\kappa+\omega \frac{1}{2}R\right)R
=R.
\]
Employing $\textbf{A3}$, this last equality holds true if

\begin{equation}
\label{eq:R2}
R= \frac{1-\kappa}{\omega}\pm \sqrt{\frac{(1-\kappa)^2}{\omega^2}-\frac{2\alpha_n}{\omega}}.
\end{equation}
\end{proof}

Let us go back to \eqref{eq:R2} in the proof. We see that the map $g$ also satisfies
\begin{equation}
\label{eq:second-ball}
g\left(B_{r}(x_{n})\right)\subset B_{r}(x_{n})
\end{equation}
with $r=\frac{1-\kappa}{\omega}-\sqrt{\frac{(1-\kappa)^2}{\omega^2}-\frac{\alpha_{n}}{\omega}}$.

Next we give an existence result addressing the zeros $u\in U$ of $f$.

\begin{cor}
\label{existence}
Assumptions and notations as in the preceding Theorem \ref{theorem:1}. Then, there exists a zero $u \in B_{R}(x_{n}) $ of $f$.
\end{cor}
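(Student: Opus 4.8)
The plan is to produce the zero as a fixed point of the map $g$ from \eqref{eq:g} via Banach's fixed-point theorem, with Theorem \ref{theorem:1} supplying the self-map half of the hypotheses for free. First I would record two preliminary facts. The ball $B_{R}(x_{n})$ is a closed, hence complete, subset of $\mathbb{R}^{n}$, and Theorem \ref{theorem:1} already gives $g(B_{R}(x_{n}))\subset B_{R}(x_{n})$; so $g$ is a self-map of a complete space. Moreover a fixed point $u=g(u)$ is exactly a zero of $f$, since it satisfies $\M(x_{n})^{-1}f(u)=0$, and \textbf{A2} forces $\M(x_{n})^{-1}\J_{f}(x_{n})$ to lie within distance $\kappa<1$ of $\Id$, hence to be invertible, whence $\M(x_{n})^{-1}$ is invertible and $f(u)=0$.

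Second, I would establish a contraction estimate. For $v,w\in B_{\rho}(x_{n})$ I would proceed as in the proof of Theorem \ref{theorem:1} and write
\[
g(v)-g(w)=\int_{0}^{1}\left(\Id-\M(x_{n})^{-1}\J_{f}(w+s(v-w))\right)(v-w)\,\mathrm{d}s,
\]
split the integrand into $(\Id-\M(x_{n})^{-1}\J_{f}(x_{n}))(v-w)$ and $\M(x_{n})^{-1}(\J_{f}(x_{n})-\J_{f}(w+s(v-w)))(v-w)$, bound the first by $\kappa\norm{v-w}$ through \textbf{A2}, and control the second by an affine covariant Lipschitz bound of the type \eqref{eq:affine invariant}. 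Since every point $w+s(v-w)$ lies in $B_{\rho}(x_{n})$ and thus stays within distance $\rho$ of $x_{n}$, the two contributions combine to $\norm{g(v)-g(w)}\leq(\kappa+\omega\rho)\norm{v-w}$.

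Third, I would select the radius so that this Lipschitz constant drops strictly below one. On the large ball this fails, because $\kappa+\omega R>1$; but on the smaller invariant ball, whose radius $r$ is the root carrying the minus sign in \eqref{eq:R2} and for which the self-map property \eqref{eq:second-ball} equally holds, one computes $\omega r=(1-\kappa)-\sqrt{(1-\kappa)^{2}-2\omega\alpha_{n}}$, so that $\kappa+\omega r=1-\sqrt{(1-\kappa)^{2}-2\omega\alpha_{n}}<1$, where \textbf{A3} guarantees the radicand is nonnegative. As $g$ is then simultaneously a self-map and a contraction on the complete space $B_{r}(x_{n})$, Banach's fixed-point theorem yields a unique fixed point $u\in B_{r}(x_{n})\subset B_{R}(x_{n})$, which by the first step is the sought zero of $f$; as a by-product the simplified Newton iteration \eqref{eq:simplified-newton} started at $x_{n}$ converges to $u$.

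The main obstacle I anticipate is the second step, precisely the passage from the \emph{radial} hypothesis \textbf{A1} to a Lipschitz bound valid along an arbitrary chord $[w,v]$ that need not emanate from $x_{n}$: as stated, \eqref{eq:affine invariant} only controls $\J_{f}(x_{n})-\J_{f}(z)$ tested against the radial direction $x_{n}-v$. I would therefore either invoke the standard affine covariant Lipschitz condition that \textbf{A1} is meant to encode, or—staying strictly inside the stated hypotheses—forgo a general contraction and instead run $u_{j+1}=g(u_{j})$ with $u_{n}=x_{n}$, estimating the increments $u_{j+1}-u_{j}$ by referencing $x_{n}$ at each step so as to show directly that $(u_{j})$ is Cauchy with geometric rate $\kappa+\omega r$. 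Either route delivers a limit in $B_{r}(x_{n})$ that is a zero of $f$; alternatively, a purely topological shortcut via Brouwer's fixed-point theorem applied to the continuous self-map $g$ of the compact convex ball $B_{R}(x_{n})$ already yields existence, at the price of losing uniqueness and the constructive simplified-Newton interpretation.
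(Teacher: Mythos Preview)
Your last sentence is the paper's proof in full: Theorem~\ref{theorem:1} gives a continuous self-map of the compact convex ball $B_{R}(x_{n})$, and Brouwer's fixed-point theorem supplies a fixed point of $g$, hence a zero of $f$. That is all the corollary asserts.

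The Banach route you lead with runs into exactly the obstacle you flag, and it is genuine rather than merely technical. Assumption \textbf{A1} is radial in two ways---the perturbed point $z$ lies on the segment from $x_{n}$, and the test vector is $x_{n}-v$---so it does not deliver the chordwise bound $\norm{\M(x_{n})^{-1}(\J_{f}(x_{n})-\J_{f}(z))(v-w)}\le\omega\rho\norm{v-w}$ that your contraction estimate needs. The paper confirms this reading: to obtain a contraction (Theorem~\ref{theorem:2} and Corollary~\ref{simplified-conv}) it explicitly strengthens \textbf{A1}--\textbf{A4} to $\textbf{A}^{\bigstar}\bm{1}$--$\textbf{A}^{\bigstar}\bm{4}$, allowing an arbitrary base point $x$ in the Lipschitz hypothesis and upgrading \textbf{A2} to hold uniformly in $x$. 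Your second fallback---iterating from $x_{n}$ and ``referencing $x_{n}$ at each step''---also breaks down after the first increment, since $u_{j+1}-u_{j}=g(u_{j})-g(u_{j-1})$ already involves a chord $[u_{j-1},u_{j}]$ not through $x_{n}$ once $j\ge n+2$. Under \textbf{A1}--\textbf{A4} alone, Brouwer is the right tool, and the corollary is deliberately placed to separate bare existence from the constructive convergence statement that requires the starred hypotheses.
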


\begin{proof}
From the proof of Theorem \ref{theorem:1} we have that $g(B_{R}(x_{n}))\subset B_{R}(x_{n})$. Employing Brouwer's fixed point theorem we deduce the existence of a fixed point $u\in B_{R}(x_{n})$ of $g$ which is the asserted zero of $f$. 
\end{proof}

In view of the iteration procedure \eqref{eq:simplified-newton} it would be preferable if we can guarantee its convergence within the ball $B_{R}(x_{n})\subset U$. Indeed, if $g$ from \eqref{eq:g} is a contraction in $B_{R}(x_{n})$ we can conlude the existence of a unique fixed point of $g$ which can be obtained by iterating \eqref{eq:simplified-newton}. In doing so we need to strengthen the assumptions \textbf{A1}\&\textbf{A2}\&\textbf{A3} and \textbf{A4} as follows:

\begin{itemize}
\item[$\textbf{A}^{\bigstar}\bm{1}.$] Let $\omega^{\star}$ be a positive constant. For any $x,v \in U$ and for any $z \in \{tx+(1-t)v|t\in [0,1]\}$ we assume that there holds the following affine covariant type Lipschitz-condition on $\J_{f}$:
\begin{equation}
\label{eq:affine_invariant_2}
\norm{\M(x_n)^{-1}(\J_{f}(x)-\J_{f}(z))(x-v)}\leq \omega^{\star} (1-t)\norm{x-v}^{2}.
\end{equation}

\item[$\textbf{A}^{\bigstar}\bm{2}.$] For any $x \in U$ there holds:
\begin{equation}
\label{eq:kappa2}
\norm{\Id-\M(x_{n})^{-1}\J_{f}(x)}\leq \kappa^{\star} <1.
\end{equation}

\item[$\textbf{A}^{\bigstar}\bm{3}.$]
For $\alpha_{n}=\norm{\M(x_{n})^{-1}f(x_{n})}>0$ we need to assume that 
\begin{equation}
\label{eq:main-condition-star}
\omega^{\star} \alpha_{n} \leq \frac{(1-\kappa^{\star})^2}{2}.
\end{equation}

\item[$\textbf{A}^{\bigstar}\bm{4}.$] For 
\begin{equation}
\label{eq:radius-star}
R^{\star}:=\frac{1-\kappa^{\star}}{\omega^{\star}}+ \sqrt{\frac{(1-\kappa^{\star})^2}{{\omega^{\star}}^2}-\frac{2\alpha_n}{\omega^{\star}}}
\end{equation}
there holds $B_{R^{\star}}(x_n)\subset U$.
\end{itemize}

Note that for $x=x_{n}$ we have $\omega=\omega^{\star}$ and $\kappa=\kappa^{\star}$. Now we are ready to prove the following result:

\begin{theorem}
\label{theorem:2}
Suppose that $f \in C^{1}(U;\mathbb{R}^{n})$. Further assume that there holds the assumption $\emph{\textbf{A}}^{\bigstar}\bm{1}\&\emph{\textbf{A}}^{\bigstar}\bm{2}\&\emph{\textbf{A}}^{\bigstar}\bm{3}\&\emph{\textbf{A}}^{\bigstar}\bm{4}$.

Then the map from \eqref{eq:g} satisfies firtsly
\[
g(B_{R^{\star}}(x_{n}))\subset B_{R^{\star}}(x_{n}).
\] 

and is a contraction on $ B_{R^{\star}}(x_{n})$.

\end{theorem}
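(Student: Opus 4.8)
The plan is to verify the two assertions in turn: that $g$ maps the ball $B_{R^\star}(x_n)$ into itself, and that $g$ is a contraction there. (The uniqueness of the resulting fixed point, and hence of the local zero of $f$, then follows from Banach's fixed-point theorem, which is invoked later.)

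For the self-mapping property I would simply reuse the proof of Theorem~\ref{theorem:1}. Indeed, by the remark preceding the statement, evaluating $\textbf{A}^{\bigstar}\bm{1}$ and $\textbf{A}^{\bigstar}\bm{2}$ at the particular point $x=x_n$ reproduces \textbf{A1} and \textbf{A2} with $\omega=\omega^\star$ and $\kappa=\kappa^\star$; combined with $\textbf{A}^{\bigstar}\bm{3}$ and $\textbf{A}^{\bigstar}\bm{4}$ this puts us exactly in the setting of Theorem~\ref{theorem:1} with $(\omega,\kappa,R)$ replaced by $(\omega^\star,\kappa^\star,R^\star)$. The computation leading to \eqref{eq:zw} and the estimate $\norm{g(v)-x_n}\le \alpha_n+\big(\kappa^\star+\tfrac{\omega^\star}{2}R^\star\big)R^\star=R^\star$ therefore carry over verbatim, the last equality being the statement that $R^\star$ is the larger root of the quadratic $\tfrac{\omega^\star}{2}R^2-(1-\kappa^\star)R+\alpha_n=0$.

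For the contraction I would fix $u,v\in B_{R^\star}(x_n)$ and write
\[
g(u)-g(v)=(u-v)-\M(x_n)^{-1}\big(f(u)-f(v)\big).
\]
Along the segment $z(t):=tu+(1-t)v$, which lies in the convex ball $B_{R^\star}(x_n)\subset U$ by $\textbf{A}^{\bigstar}\bm{4}$, the integral form of the mean value theorem gives $f(u)-f(v)=\int_0^1\J_{f}(z(t))(u-v)\,\mathrm{d}t$, so that
\[
g(u)-g(v)=\int_0^1\big(\Id-\M(x_n)^{-1}\J_{f}(z(t))\big)(u-v)\,\mathrm{d}t.
\]
The decisive point is that $\textbf{A}^{\bigstar}\bm{2}$ holds at \emph{every} point of $U$, hence $\norm{\Id-\M(x_n)^{-1}\J_{f}(z(t))}\le\kappa^\star$ uniformly in $t\in[0,1]$. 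Taking norms under the integral then yields $\norm{g(u)-g(v)}\le\kappa^\star\norm{u-v}$ with $\kappa^\star<1$, which is the desired contraction estimate.

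The genuinely new ingredient---and the reason the starred hypotheses must be stronger than the unstarred ones---is precisely this uniformity: the self-mapping argument of Theorem~\ref{theorem:1} only ever evaluates the relevant quantities at $x_n$, whereas to bound $\norm{g(u)-g(v)}$ one must control $\Id-\M(x_n)^{-1}\J_{f}$ along the entire segment joining two arbitrary points of the ball. This is exactly what $\textbf{A}^{\bigstar}\bm{2}$ supplies, and I expect no real obstacle beyond keeping track of it. Note in particular that $\textbf{A}^{\bigstar}\bm{1}$ does not enter the contraction estimate---the factor $\Id-\M(x_n)^{-1}\J_{f}(z(t))$ is absorbed entirely by $\textbf{A}^{\bigstar}\bm{2}$---so that its role, together with $\textbf{A}^{\bigstar}\bm{3}$, is confined to making $R^\star$ well-defined and to the self-mapping half of the claim.
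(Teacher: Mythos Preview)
Your proof is correct. The self-mapping half is handled exactly as in the paper, by specializing the starred hypotheses at $x=x_n$ and invoking Theorem~\ref{theorem:1}.

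For the contraction, however, you take a genuinely different---and simpler---route than the paper. The paper splits the integrand as
\[
\big(\Id-\M(x_n)^{-1}\J_f(z(t))\big)(x-y)=\big(\Id-\M(x_n)^{-1}\J_f(x)\big)(x-y)+\M(x_n)^{-1}\big(\J_f(x)-\J_f(z(t))\big)(x-y),
\]
applies $\textbf{A}^{\bigstar}\bm{2}$ to the first piece and $\textbf{A}^{\bigstar}\bm{1}$ to the second, and arrives at the Lipschitz factor $\kappa^{\star}+\tfrac{\omega^{\star}}{2}\norm{x-y}$, which then has to be shown to be strictly less than~$1$ using the explicit form of $R^{\star}$. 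You instead apply $\textbf{A}^{\bigstar}\bm{2}$ directly to $\Id-\M(x_n)^{-1}\J_f(z(t))$ at every point of the segment, obtaining the cleaner bound $\norm{g(u)-g(v)}\le\kappa^{\star}\norm{u-v}$ with no further work. Your observation that $\textbf{A}^{\bigstar}\bm{1}$ is not needed for the contraction is therefore accurate for your argument but not for the paper's; what your approach buys is a shorter proof and a sharper (uniform) contraction constant, while the paper's decomposition mirrors the structure of Theorem~\ref{theorem:1} and makes explicit how both starred hypotheses interact.
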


\begin{proof}
The first assertion follows from the proof of Theorem \ref{theorem:1} and choosing $x=x_n$. Thus we are left to show that $g$ is a contraction.
Notice that
\[
\begin{aligned}
g(x)-g(y)&=(x-y)-\M(x_{n})^{-1}(f(x)-f(y))\\
&=\int_{0}^{1}{(\Id-\M(x_n)^{-1}\J_{f}(z(t)))(x-y)\mathrm{d}t}\\
&=\int_{0}^{1}{(\Id-\M(x_{n})^{-1}\J_{f}(x))(x-y)\mathrm{d}t}\\
&+\int_{0}^{1}{\M(x_n)^{-1}(\J_{f}(x)-\J_{f}(z(t)))(x-y)\mathrm{d}t}.
\end{aligned}
\]

Thus, for $x,y\in B_{R^{\star}}(x_{n})$ there holds:
\[
\norm{g(x)-g(y)}=\norm{(x-y)- \M(x_{n})^{-1}(f(x)-f(y))}\leq \left(\kappa^{\star}+\omega^{\star} \frac{1}{2}\norm{x-y}\right)\norm{x-y}.
\]

Since $\kappa^{\star}+\omega^{\star} \frac{1}{2}\norm{x-y}\leq \kappa^{\star} +\frac{\omega^{\star}}{2}R^{\star}$ and $R^{\star}<\frac{2(1-\kappa^{\star})}{\omega^{\star}} $, there holds
\[
\kappa^{\star}+\omega^{\star} \frac{1}{2}\norm{x-y}<1,
\]
i.e., we conclude that $g$ is a contraction.
\end{proof}

\begin{cor}
\label{simplified-conv}
Assumptions and notations as in the preceding Theorem \ref{theorem:2}. Then, for any initial value $x_{n}\in U$ the simplified Newton-like iterates \eqref{eq:simplified-newton} remain in $B_{R^{\star}}(x_{n})$ and converge to a unique zero $u_{\infty}\in B_{R^{\star}}(x_{n})$ of $f$. 
\end{cor}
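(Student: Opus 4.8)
The plan is to assemble the conclusion directly from Theorem~\ref{theorem:2} together with the Banach fixed-point theorem. Since Theorem~\ref{theorem:2} asserts that $g$ maps the closed ball $B_{R^{\star}}(x_n)$ into itself and is a contraction on it, and since $B_{R^{\star}}(x_n)\subset U\subset\mathbb{R}^{n}$ is a closed subset of a complete metric space, the ball $B_{R^{\star}}(x_n)$ is itself complete. The Banach fixed-point theorem then yields a \emph{unique} fixed point $u_{\infty}\in B_{R^{\star}}(x_n)$ of $g$, and because $g(v)=v-\M(x_n)^{-1}f(v)$, the fixed-point equation $g(u_{\infty})=u_{\infty}$ is equivalent to $\M(x_n)^{-1}f(u_{\infty})=0$; as $\M(x_n)$ is invertible, this forces $f(u_{\infty})=0$. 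Hence $u_{\infty}$ is the asserted unique zero in the ball.

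First I would verify that the iteration \eqref{eq:simplified-newton} is precisely the Picard/fixed-point iteration for $g$, namely $u_{j+1}=g(u_{j})$ with $u_{n}=x_{n}$. Next I would check that the iterates never leave the ball: since $x_n\in B_{R^{\star}}(x_n)$ trivially and $g$ is a self-map of $B_{R^{\star}}(x_n)$, an immediate induction on $j$ shows $u_{j}\in B_{R^{\star}}(x_n)$ for all $j\geq n$. With the contraction constant $\theta:=\kappa^{\star}+\tfrac{\omega^{\star}}{2}R^{\star}<1$ from the proof of Theorem~\ref{theorem:2}, the standard contraction estimate $\norm{u_{j+1}-u_{\infty}}\leq\theta\norm{u_{j}-u_{\infty}}$ gives geometric convergence $u_{j}\to u_{\infty}$, which also certifies that the limit lies in the closed ball.

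I do not expect any genuine obstacle here, as the corollary is essentially a restatement of Banach's fixed-point theorem once Theorem~\ref{theorem:2} is in hand. The only point that warrants a line of care is the equivalence between fixed points of $g$ and zeros of $f$, which relies on the invertibility of $\M(x_n)$ guaranteed by assumption $\textbf{A}^{\bigstar}\bm{2}$ (the bound $\norm{\Id-\M(x_n)^{-1}\J_{f}(x_n)}\leq\kappa^{\star}<1$ forces $\M(x_n)^{-1}\J_{f}(x_n)$, and hence $\M(x_n)$, to be nonsingular). A secondary subtlety is that uniqueness of the zero is asserted only within $B_{R^{\star}}(x_n)$, not globally; this is exactly what Banach's theorem delivers and should be phrased accordingly, without overclaiming global uniqueness.
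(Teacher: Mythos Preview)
Your proposal is correct and follows essentially the same approach as the paper: invoke Theorem~\ref{theorem:2} to get that $g$ is a self-map and a contraction on $B_{R^{\star}}(x_n)$, then apply Banach's fixed-point theorem. You are slightly more careful than the paper in spelling out completeness of the closed ball, the induction keeping the iterates in the ball, and the equivalence between fixed points of $g$ and zeros of $f$ via invertibility of $\M(x_n)$, but the structure of the argument is identical.
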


\begin{proof}
From the proof of Theorem \ref{theorem:1} we have that for $j\geq n $ the iterates $u_{j+1}=g(u_{j})$ remain in $B_{R^{\star}}(x_{n})$. Furthermore we have also shown that $g$ is a contraction on $B_{R^{\star}}(x_{n})$. Thus, by Banach's fixed-point theorem we deduce that $\lim_{j\to \infty}{g(u_{j})}=u_{\infty}$ exists, which is the unique zero of $f$ in $B_{R^{\star}}(x_{n})$. 
\end{proof}

\begin{figure}
\includegraphics[width=0.6\textwidth]{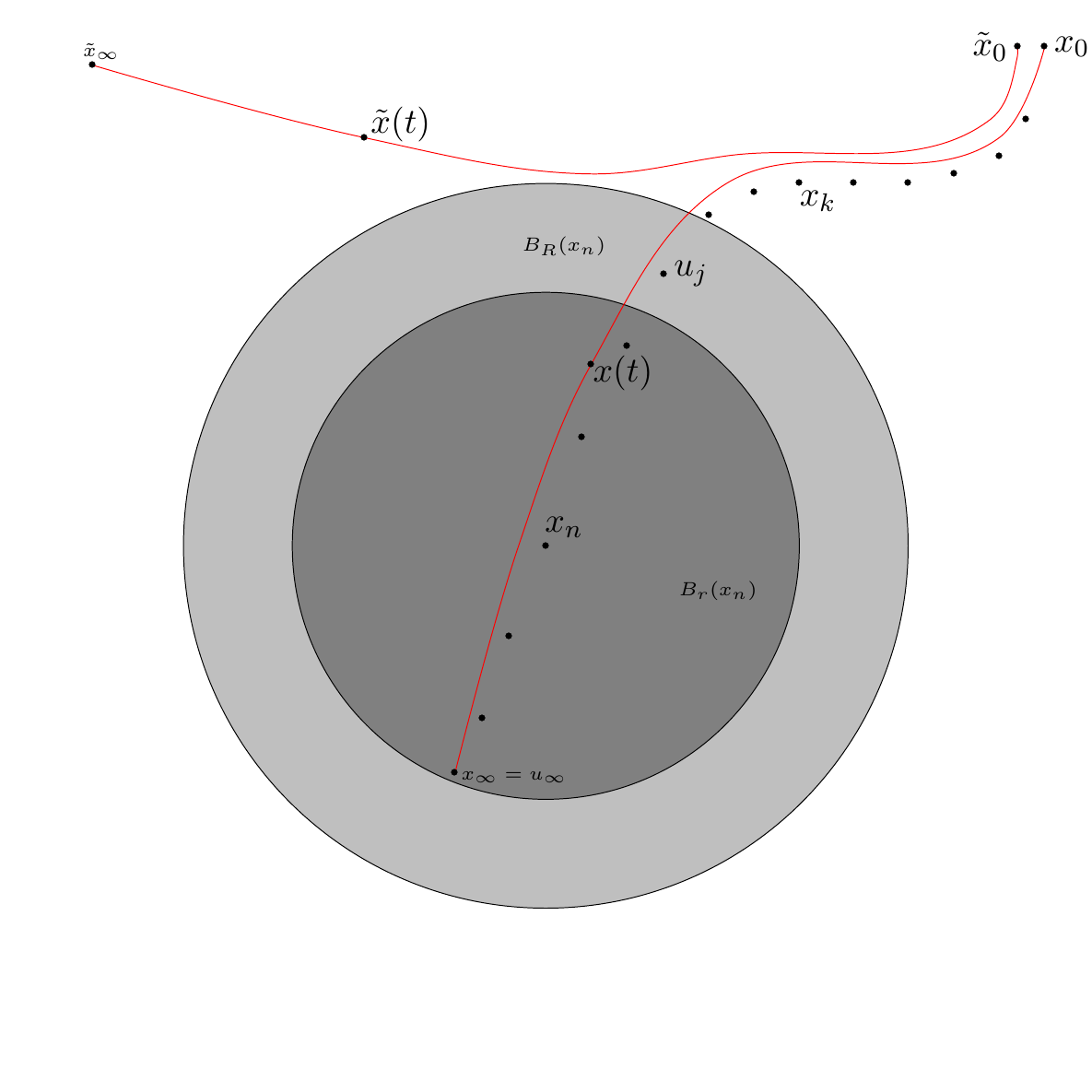}
\caption{The adaptively computed sequence $x_{k}$ switching to the simplified Newton-type scheme within the ball $B_{R}(x_{n})$ which finally leads to the zero $x_{\infty}$. Moreover, we depict two different trajectories $x(t)$ and $\tilde{x}(t)$ respectively---each of them leading to a different zero.}
\label{Fig:Sequence}
\end{figure}

From a computational point of view we can try to switch from the Newton-like iteration scheme \eqref{eq:Newton-Like} to a simplified Newton-like scheme 
\begin{equation}
\label{eq:simplified}
u_{j+1}=u_{j}-\M(x_{n})^{-1}f(u_j), \quad j\geq n
\end{equation}

as soon as there holds $\alpha_{n}\omega^{\star}\leq\frac{(1-\kappa^{\star})^2}{2}$. 
Therefore we need to control the Lipschitz constant $\omega^{\star}$. In doing so, we replace the computational unavailable constant $\omega^{\star}$ by a quantity $\hat{\omega}$ that we can easily compute during the iteration procedure and which comes at no extra cost. Henceforth, suppose we have computed $x_{n+1}, x_{n}$. In view of \eqref{eq:affine invariant}, it is reasonable to switch to the iteration \eqref{eq:simplified} whenever there holds 
\[
\alpha_{n}\hat{\omega}=\alpha_{n}\frac{\norm{\M(x_{n})^{-1}(\J_{f}(x_{n+1})-\J_{f}(x_{n}))(x_{n+1}-x_{n})}}{\norm{x_{n+1}-x_{n}}^2}\leq\frac{(1-\kappa^{\star})^2}{2}<\frac{1}{2}.
\]

In addition, for $\M(x_{n})=\J_{f}(x_n)$ and $x\in B_{R^{\star}}(x_n)$ we observe
\[
\norm{\Id-\J_{f}(x_{n})^{-1}\J_{f}(x)}\approx \norm{\Id-\J_{f}(x_{n})^{-1}\J_{f}(x_{n})}=0,
\]
i.e. $\kappa =0$.

\section{Numerical Experiments}
\label{sec:3}

\subsection{Adaptive strategy}
We now propose a procedure that realizes an adaptive strategy based on the previous observations. The individual computational steps are summarized in Algorithm~\ref{al:full}.

\begin{algorithm}
\caption{Adaptive simplified-Newton-like method:}
\label{al:full}
\begin{algorithmic}[1]
\State{\textbf{Input}:}
\begin{enumerate} 
\item[$\bullet$]
initial value $x_0 \in U$,
\item[$\bullet$]
error tolerance $\varepsilon>0$ respectively.
\end{enumerate}
\State{$\delta_0\leftarrow -\M(x_0)^{-1}f(x_0)$} \Comment{compute a first correction}
\State{$t\leftarrow \min\left(1,t\right)$} \Comment{compute an initial step size based on an adaptive procedure, see e.g. \cite{Amrein:18,Potschka, ScWi11}}
\State{$x_s \leftarrow x_0$}
\For{$k=1,2,\dots$}
\If{$\norm{\delta_0}\leq \varepsilon$}
\State{\Return $x_{\infty} \leftarrow x_0$}{\Comment return the solution}
\Else
\State{$t\leftarrow t$}\Comment{Compute a step size based on an adaptive procedure, see e.g.\cite{Amrein:18,Potschka,ScWi11}}
\State{{$x_0\leftarrow x_0+t\delta_0$}{\Comment perform a step}}
\State{$\omega\leftarrow \frac{\norm{\M(x_s)^{-1}(\J_{f}(x_0)-\J_{f}(x_s))(x_0-x_s)}}{\norm{x_0-x_s}^{2}}$} \Comment{compute the Lipschitz constant}
\If{$\norm{\delta_0}\omega \leq\frac{1}{2}$}{\Comment start the simplified Newton-like scheme}
\State{Compute $x_{\infty}$ based on the simplified iteration scheme \eqref{eq:simplified-newton}}
\State{\Return $x_{\infty}$}{\Comment return the solution}
\State{\textbf{break the iteration}}
\EndIf
\State{{$\delta_0\leftarrow -\M(x_0)^{-1}f(x_0)$}{\Comment update the direction}}
\State{{$t\leftarrow \min{\left(1,t\right)}$}{\Comment predict the step size}}
\State{$x_s \leftarrow x_{0}$}
\EndIf
\EndFor
\end{algorithmic}
\end{algorithm}

Let us briefly comment on the adaptive procedure \ref{al:full}: 
In steps $3 \& 18$ we predict a step size $t$ such that $t=1$ whenever the iterates are `close enough' to the zero $x_{\infty}$. Thus, the proposed procedure allows full steps whenever the iterates are `sufficiently' close to $x_{\infty}$. The computation of $t\in (0,1]$ typically relies on a computational upper bound with respect to the distance $\norm{x(t_{n})-x_{n}}$. There exists different suggested approaches towards an effective computation of the step size $t$ (see e.g., \cite{D04,HeidWihler:18,Amrein:18,Potschka,AmreinWihler:14,AmreinWihler:15,ScWi11}). Here we use the adaptive step size control given in \cite{Amrein:18}.    

This adaptive choice of the step size $t$ consists mainly of two parts: A prediction for the step size $t$ and a correction of the step size whenever $\norm{x_{n}-x(t_{n})}> \tau$. Here, $x(t)$ signifies the exact trajectory leading to the zero $x_{\infty}$ and $x_{n}$ is the numerical solution. Thus, the input $\tau$ is a parameter that determines how close the iterates $x_{n}$ tracks the exact trajectory $x(t)$ leading to a zero of $f$ (see also Figure \ref{Fig:Sequence}). For $\tau=\infty$ there is no restriction on $x_{n}$, i.e. Algorithm \ref{al:full} reproduces the classical Newton scheme---apart from the simplified Newton scheme given in step 13.
Furthermore, the adaptive scheme from \cite{Amrein:18} needs a lower bound $t_{\text{lower}}$ for the step size $t_{n}$ in \eqref{eq:forward}. Indeed, if $t_{n}$ degenerates to $0$, the iterative scheme is not well defined in the sense that it must be classified as not convergent. 
However,  $\tau$ is an error tolerance used in the proposed adaptive computation of the step size $t$ and determines the distance between the numerically computed iterates and the exact trajectory.

\begin{example}
\label{ex:1}
In this example we choose $\M(x)=\J_{f}(x)$. Let us consider the function
\[
f:\mathbb{C}\rightarrow \mathbb{C}, \quad z \mapsto f(z):=z^6-1.
\]
Here, we identify~$f$ in its real form in~$\mathbb{R}^{2}$, i.e., we separate the real and imaginary parts. The six zeros are given by
\[
Z_{f}=\{(1,0),(\nicefrac{1}{2},\nicefrac{\sqrt{3}}{2}),(-\nicefrac{1}{2},\nicefrac{\sqrt{3}}{2}),(-1,0),(-\nicefrac{1}{2},-\nicefrac{\sqrt{3}}{2}),(\nicefrac{1}{2},-\nicefrac{\sqrt{3}}{2})\} \subset \mathbb{R}^{2}.
\]

Note that~$\J_{f}$ is singular at~$(0,0)$. Thus if we apply the classical Newton method with
$\F(x)=-\J_{f}(x)^{-1}f(x)$ 
in~\eqref{eq:forward}, the iterates close to~$(0,0)$ cause large updates in the iteration procedure. More precisely, the application of~$\F(x)=-\J_{f}(x)^{-1}f(x)$ is a potential source for chaos near~$(0,0)$. Before we discuss our numerical experiment, let us first consider the vector fields generated by the continuous problem~\eqref{eq:initial}. In Figure~\ref{fig:flows1}, we depict the direction fields corresponding to~$\F(x)=f(x)$ (left) and~$ \F(x)=-\J_{f}(x)^{-1}f(x)$ (right). We clearly see that some elements of $Z_{f}$ are repulsive for~$\F(x)=f(x)$. Moreover, some elements of $Z_{f}$ show a curl. If we now consider~$ \F(x)=-\J_{f}(x)^{-1}f(x)$ the situation is completely different: All zeros are obviously attractive. In this example, we further observe that the vector direction field is divided into six different sectors, each containing exactly one element of~$Z_{f}$. 

Next we visualize the domains of attraction of four different Newton-type iteration schemes. More precisely, we test the following four iteration procedures:  
\begin{enumerate}
\item The proposed procedure given in Algorithm \ref{al:full}, i.e. adaptive step size control---with $\tau=0.01$
---and switching to the simplified Newton scheme which we abbreviate by \textbf{AS}.
\item The proposed procedure given in Algorithm \ref{al:full}, i.e. adaptive step size control---with $\tau=0.01$
---but without switching to the simplified Newton scheme which we abbreviate by \textbf{ANS}.
\item The proposed procedure given in Algorithm \ref{al:full}, without step size control---i.e. $\tau=\infty$---and without switching to the simplified Newton scheme which we abbreviate by \textbf{NANS}. This is simply the classical Newton iteration scheme.
\item The proposed procedure given in Algorithm \ref{al:full}, without step size control---i.e. $\tau=\infty$---but switching to the simplified Newton scheme which we abbreviate by \textbf{NAS}.
\end{enumerate}

In doing so, we compute the zeros of~$f$ by sampling initial values on a~$500\times 500 $ grid in the domain~$[-3,3]^2$ (equally spaced). In Figure~\ref{fig:frac12}, we show the fractal generated by the traditional Newton method \textbf{NANS} (left) as well as the corresponding plot for the combination of the classical Newton method and the simplified Newton method \textbf{NAS} (right). It is noteworthy that the chaotic behavior caused by the singularities of~$\J_{f}$ of the iteration procedure \textbf{NAS} is comparable to \textbf{NANS}. 

In Figure~\ref{fig:frac34} we depict the basins of attraction for the adaptive procedure as proposed in Algorithm \ref{al:full} \textbf{AS} (left) and the iteration procedure \textbf{ANS}. The chaotic behavior caused by the singularities of $\J_{f}$ is clearly tamed---by both adaptive schemes \textbf{AS} and \textbf{ANS}.

Let us finally consider some performance data given in Table~\ref{tab:1}. An initial value $x_{0}\in [-3,3]^{2}$ is called convergent if it is in fact convergent and additionally approaches the `correct' zero, i.e. the zero that is located in the same exact attractor as the initial guess $x_{0}$. Table~\ref{tab:1} nicely demonstrates that---in contrast to the non adaptive schemes \textbf{NANS} and \textbf{NAS}---the number of convergent iterations for the adaptive procedures \textbf{AS} and \textbf{ANS} is close to $100\%$.
The second line in Table~\ref{tab:1} shows the computational time---by sampling the computational time for all tested initial guesses $x_{0}\in [-3,3]^{2}$---with respect to the classical Newton iteration scheme \textbf{NANS}, i.e., we depict the quantity 
\[
\frac{\text{computational time of the considered iteration scheme}}{\text{computational time of \textbf{NANS}}}.   
\]

In view of this quantity, the proposed iteration scheme \textbf{AS} is the clear winner compared to \textbf{ANS} as can be seen from line 2 in Table~\ref{tab:1}.

\begin{figure}
\includegraphics[width=0.45\textwidth]{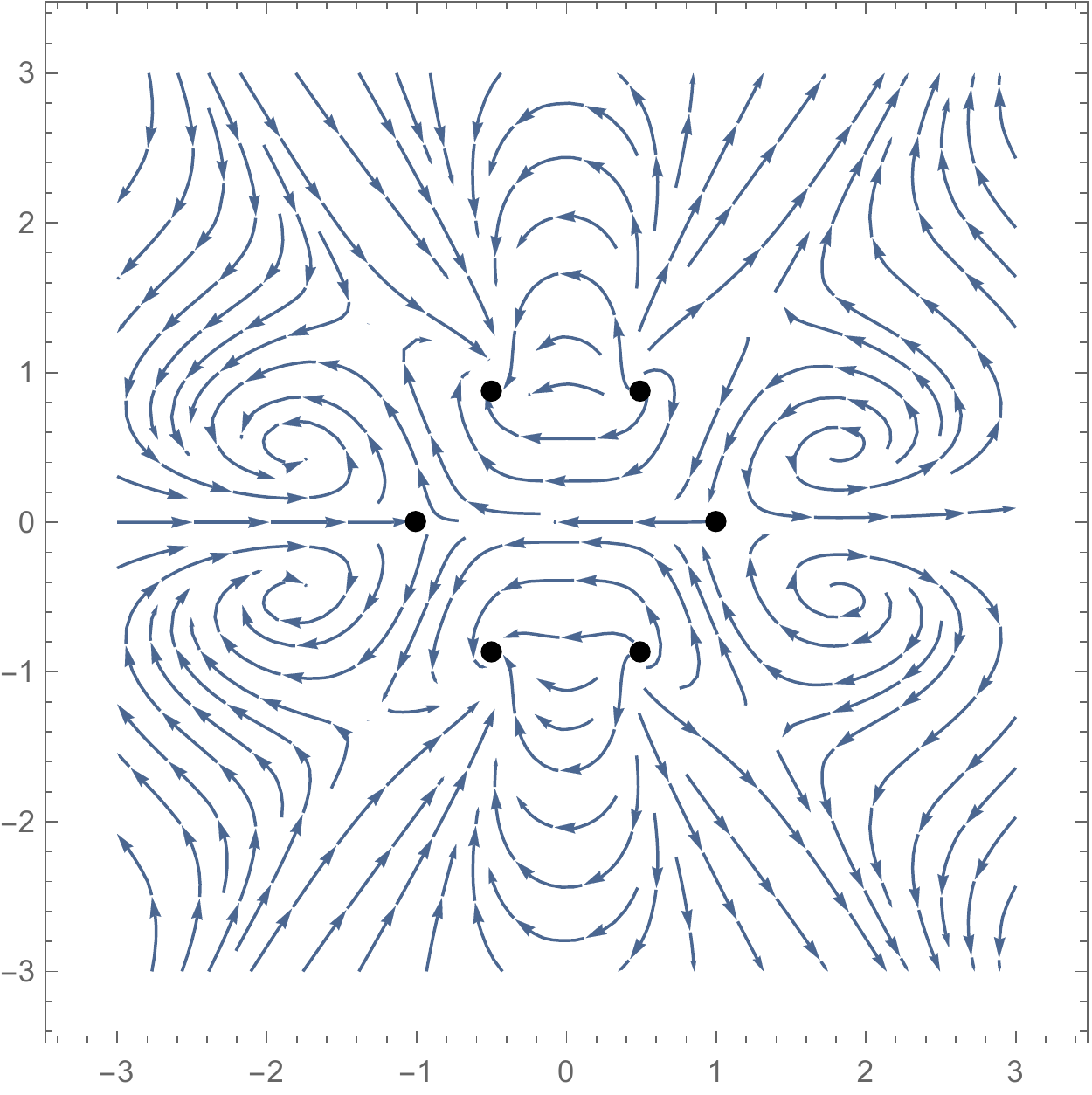}
\hfill
\includegraphics[width=0.45\textwidth]{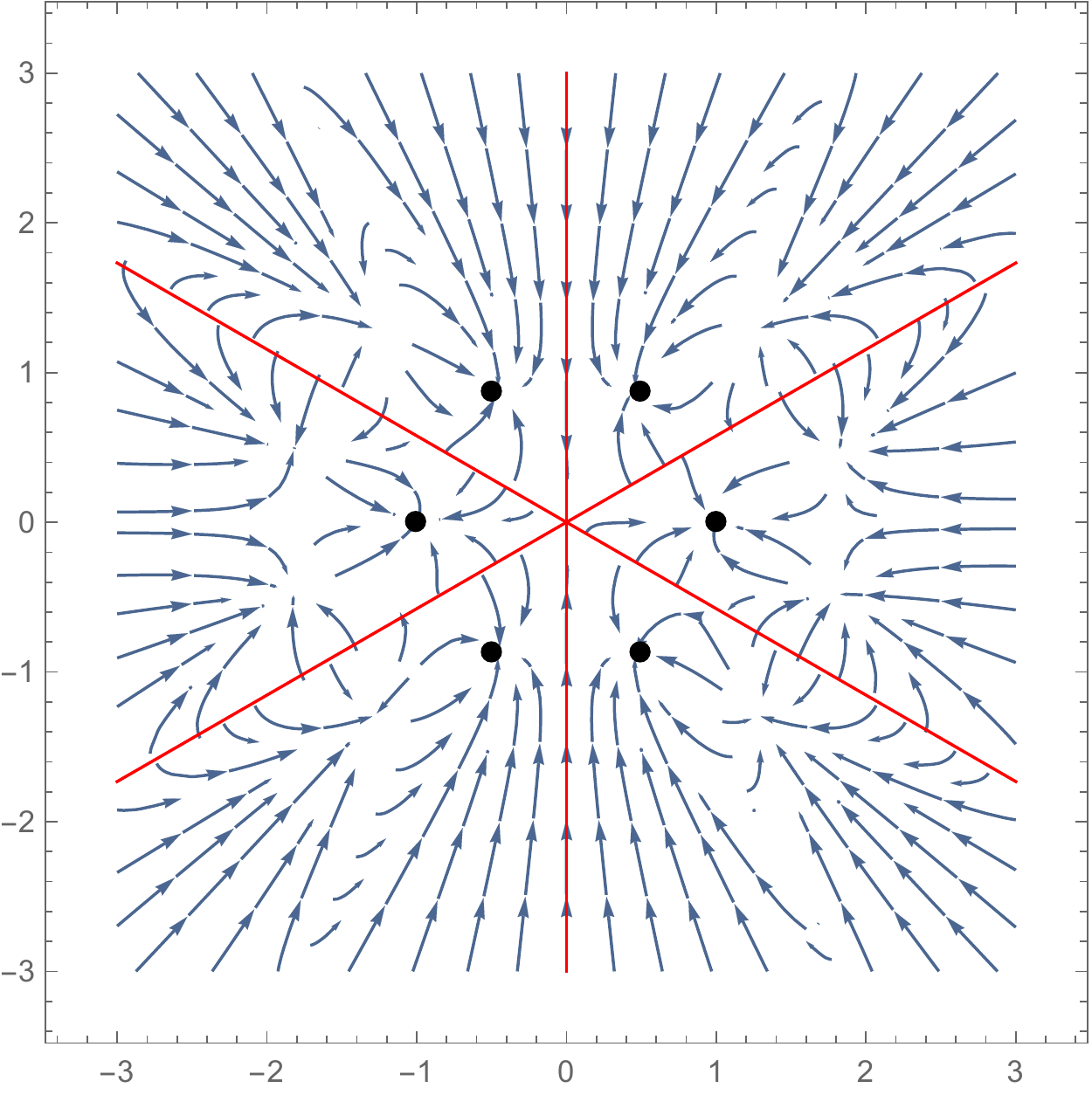}
\caption{Example~\ref{ex:1}: The direction fields corresponding to~$f(z)=z^6-1$ (left) and to the transformed~$\F(z)=-\J_{f}(z)^{-1}\cdot f(z)$ (right).}
\label{fig:flows1}
\end{figure}

\begin{figure}
\includegraphics[width=0.45 \textwidth]{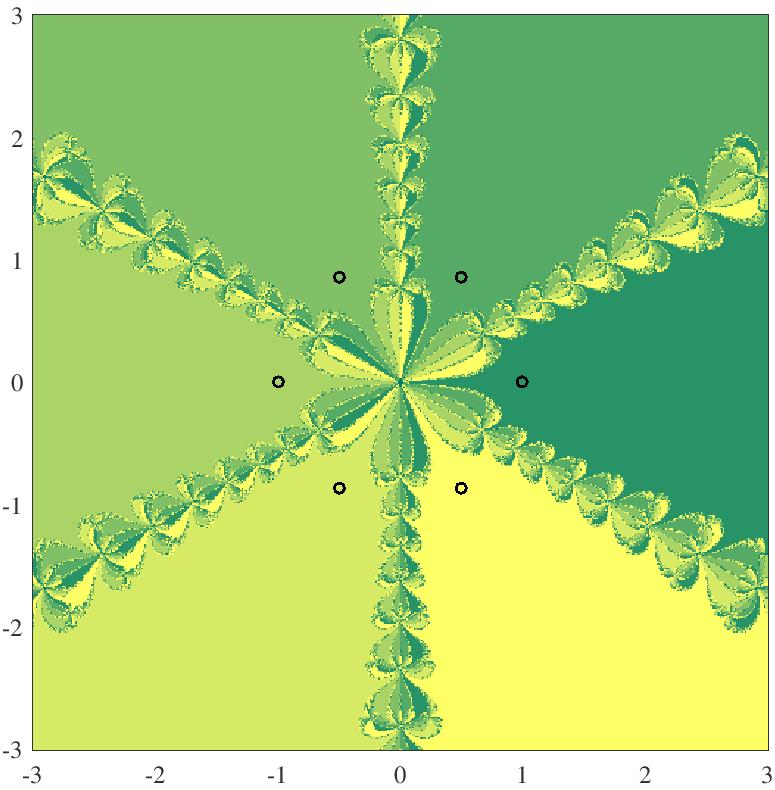}
\hfill
\includegraphics[width=0.45	\textwidth]{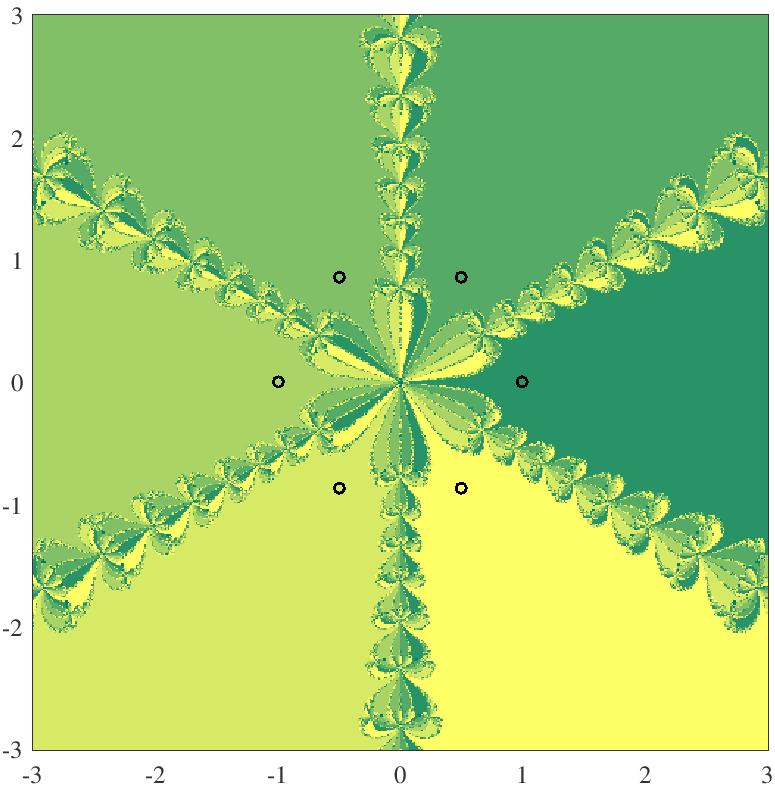}
\caption{The basins of attraction for Example~\ref{ex:1} by the Newton method. On the left for the classical Newton scheme \textbf{NANS} and on the right using the proposed simplified Newton iteration scheme \textbf{NAS}. Different colors distinguish the six basins of attraction associated with the six solutions (each of them is marked by a small circle).}
\label{fig:frac12}
\end{figure}

\begin{figure}
\includegraphics[width=0.45 \textwidth]{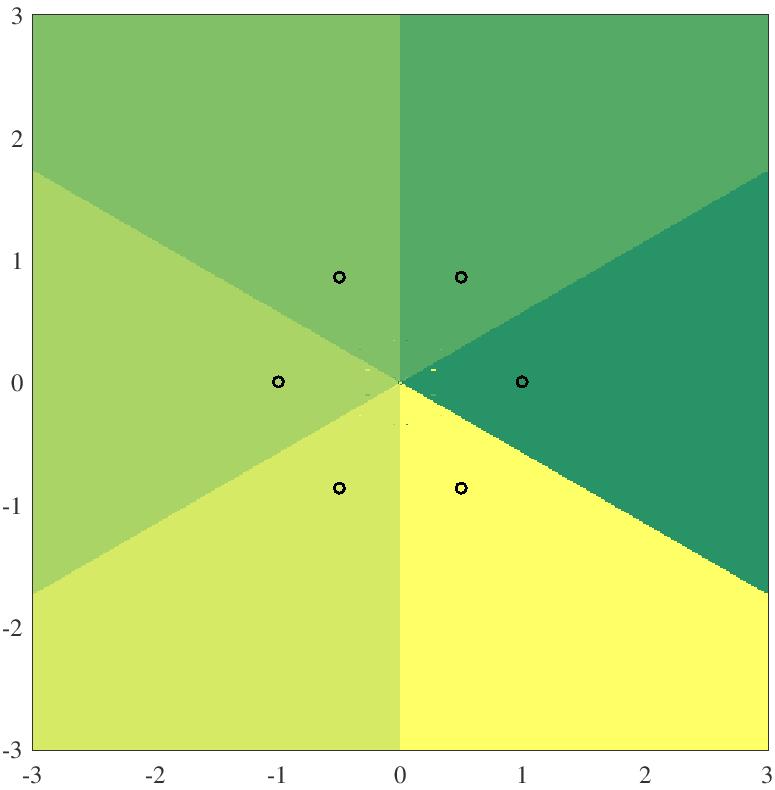}
\hfill
\includegraphics[width=0.45	\textwidth]{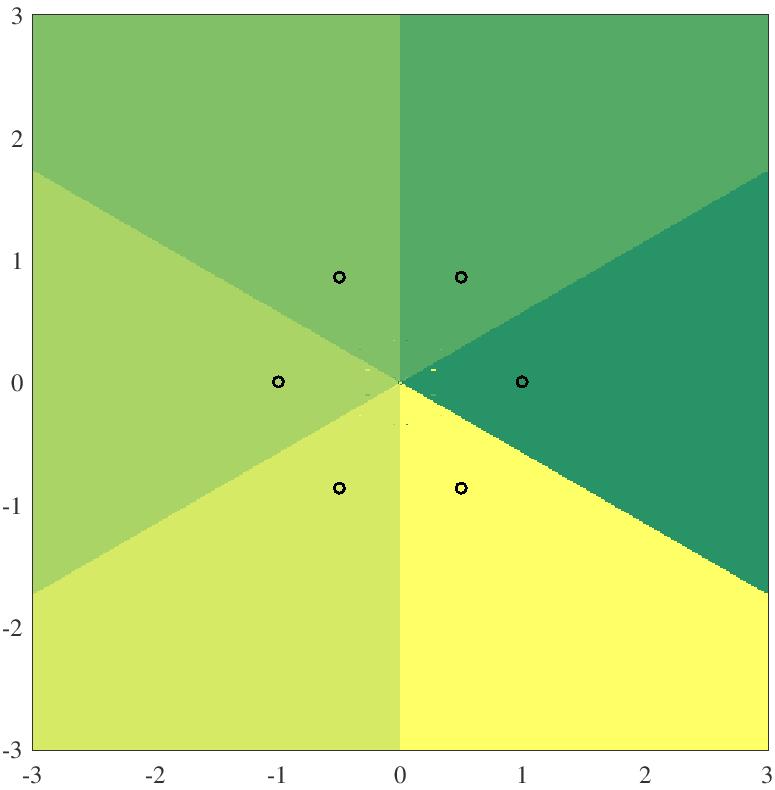}
\caption{The basins of attraction for Example~\ref{ex:1} by the Newton method. On the left with step size control (i.e.,~$t\in(0,1]$) and the proposed scheme based on Algorithm \ref{al:full} \textbf{AS}.  On the right again with step size control (i.e.,~$t\in(0,1]$) but without the simplified scheme---i.e., the derivative $\J_{f}$ was updated in each iteration step \textbf{ANS}. Six different colors distinguish the six basins of attraction associated with the six solutions (each of them is marked by a small circle).}
\label{fig:frac34}
\end{figure}

\begin{table}
\begin{center}
\begin{tabular}{p{2cm} l l l l}
& \textbf{AS} & \textbf{ANS} & \textbf{NANS} & \textbf{NAS} \\
\hline
convergent: & $99.98\%$ & $99.98\%$ & $80.4\%$ & $80.4\% $  \\
\hline 
complexity:  & $2.25$  & $2.7$ & $1$ & $1.03$ \\
\hline
\end{tabular}
\vspace{0.5cm}
\caption{Performance for Examples ~\ref{ex:1}. Here we clearly see the advantage of the proposed adaptive procedure based on the simplified Newton-type scheme \textbf{AS}. This is due to fixed derivative $\M(x_n)=\J_{f}(x_n)$ as soon as $\alpha_{n}\omega\leq \nicefrac{1}{2}$. Furthermore, we see that almost all tested initial guesses are converging to the correct zero.} 
\label{tab:1}
\end{center}
\end{table}

\end{example}

\section{Conclusions}
\label{sec:4}
In this work, we have proved a convergence result for general simplified Newton-type iteration schemes under quite reasonable assumptions. In particular, we have shown that whenever the correction $\norm{\M(x_{n})^{-1}f(x_{n})}$ is small, then there locally exists a unique zero for the underlying map $f$. Since the proof of the proposed result relies on Banach's fixed-point theorem, the theoretical result is constructive in the sense that it can be used for the numerical computation of the locally unique fixed point and therefore of the zero to be considered. Moreover, we have combined the convergence result with an adaptive root finding procedure thereby firstly taming the chaotic behavior of classical Newton-type iteration schemes and secondly reducing the computational effort due to the constant map $x\mapsto \M(x_{n})\in \mathbb{R}^{n\times n}$---without reducing the domain of convergence. We have tested our method on a low dimensional problem. Moreover, our experiment demonstrates empirically that the proposed scheme is indeed capable to tame the \emph{chaotic} behavior of the iteration compared with the classical Newton scheme, i.e., without applying any step size control. In particular, our test example illustrate that the domains of convergence can---typically---be considerably enlarged in the sense that almost all initial guesses $x_{0}$ are convergent to the `correct' zeros---i.e., the zero which is located in the same attractor as the initial guess $x_0$.

\subsection*{Acknowledgement}
The author is grateful to Pascal Heid for comments on an earlier draft of this manuscript.

\bibliographystyle{amsplain}
\bibliography{references}
\end{document}